\title[The radiation field in $3+1$-dimensions]{An explicit
  description of the radiation field in $3+1$-dimensions} 
\author{Dean Baskin}
\address{Department of Mathematics, Texas A\&M University, College
  Station, TX 77843}
\email{dbaskin@math.tamu.edu}
\date{11 April 2016}
\thanks{The author is grateful to Semyon Dyatlov for an observation
  leading to this note.  This research was conducted with the 
  support of NSF grant DMS-1500646}
\newcommand{\lap}{\Delta}
\newcommand{\sphere}{\mathbb{S}}
\newcommand{\reals}{\mathbb{R}}
\newcommand{\scri}{\mathcal{I}}
\DeclareMathOperator{\im}{Im}
\newcommand{\pd}[1][]{\partial_{#1}}
\newcommand{\vect}[1]{\mathbf{#1}}
\newtheorem{theorem}{Theorem}
\newtheorem{prop}[theorem]{Proposition}
\newtheorem{lemma}[theorem]{Lemma}
\newtheorem{question}[theorem]{Problem}
\begin{document}

\begin{abstract}
  In previous work with A.~Vasy and J.~Wunsch, the author established an
  asymptotic expansion for the radiation field on asymptotically
  Minkowski spacetimes and showed that the exponents seen in the
  expansion are given by the poles of a meromorphic family of
  operators on the spacetime's ``boundary at infinity''.  This note
  provides an explicit accounting of these poles when the spacetime is
  $3+1$-dimensional Minkowski space.  We conclude by stating the
  ``resonant states'' for the first few resonances and then posing a
  combinatorial problem.
\end{abstract}

\maketitle

\section{Introduction}
\label{sec:introduction}

For a forward solution $u$ of the inhomogeneous wave equation on
Minkowski space,
\begin{equation*}
    \Box u = f \in C^{\infty}_{c}(\reals^{3}\times \reals),
\end{equation*}
(or, equivalently, a solution $u$ of the homogeneous wave equation
with compactly supported initial data), the \emph{Friedlander
  radiation field} of $u$ encodes the behavior of $u$ near null
infinity.  With A.~Vasy and J.~Wunsch~\cite{BVW1, BVW2}, the author
established an asymptotic expansion of the radiation field on a class
of asymptotically Minkowski spacetimes and showed that the exponents
of the expansion were given by the poles of a meromorphic family of
operators (called $P_{\sigma}^{-1}$ in those papers) on the boundary
at infinity.  The purpose of this note is to identify explicitly these
poles in the setting of $(3+1)$-dimensional Minkowski space.  In
particular, we prove the following theorem:

\begin{theorem}
  \label{thm:main-thm}
  The poles of $P_{\sigma}^{-1}$ are simple and located at $-\imath (k+1)$ for $k=0,
  1, 2, \dots$.  The rank of the polar part of $P_{\sigma}^{-1}$ at
  $\sigma = -\imath (k+1)$ is $\sum_{j=0}^{k}\dim (E_{j}) = (k+1)^{2}$, where
  $E_{j}$ is the eigenspace of $\lap_{\sphere^{2}}$ with eigenvalue
  $j$.  
\end{theorem}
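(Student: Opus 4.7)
The plan is to exploit the full rotational symmetry of $(3+1)$-dimensional Minkowski space to reduce $P_\sigma$ to a family of ordinary differential operators parameterized by the spherical harmonic degree, then to invert each and assemble the contributions. First I would write $P_\sigma$ explicitly in coordinates adapted to the boundary at infinity of the conformal compactification used in \cite{BVW1, BVW2}. Because Minkowski space is $SO(3)$-invariant, the induced boundary operator commutes with $\lap_{\sphere^{2}}$, so $P_\sigma$ preserves the orthogonal decomposition $L^{2}(\sphere^{2}) = \bigoplus_{\ell \geq 0} E_{\ell}$ and restricts to an operator $P_{\sigma,\ell}$ on each eigenspace $E_{\ell}$ of dimension $2\ell+1$. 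In the remaining non-angular coordinate $P_{\sigma,\ell}$ is a second-order linear ODE whose coefficients depend polynomially on $\sigma$ and on the spherical eigenvalue $\ell(\ell+1)$.

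Second, I would compute $P_{\sigma,\ell}^{-1}$ in closed form. The expected structure is that $P_{\sigma,\ell}$ is Fuchsian with regular singular points whose indicial roots are shifted by $\sigma$ and by $\ell$, so the resolvent can be written through classical special functions of hypergeometric or Bessel type. From this explicit form I would read off the poles: I expect $P_{\sigma,\ell}^{-1}$ to have simple poles precisely at $\sigma = -\imath(\ell + 1 + j)$ for $j = 0, 1, 2, \dots$, each arising from a one-dimensional kernel forced by the indicial exponents at the two endpoints.

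Third, I would assemble the modes. The poles of $P_\sigma^{-1}$ are the union of those of the $P_{\sigma,\ell}^{-1}$, and the pole at $\sigma = -\imath(k+1)$ receives exactly one simple contribution from each $\ell \in \{0,1,\dots,k\}$ (by setting $j = k-\ell$). Since the eigenspaces $E_{\ell}$ are mutually orthogonal, ranks of polar parts add, yielding
\begin{equation*}
  \sum_{\ell=0}^{k}\dim(E_{\ell}) = \sum_{\ell=0}^{k}(2\ell+1) = (k+1)^{2}.
\end{equation*}
The principal obstacle is the first step: producing a clean normal form for $P_\sigma$ in which the mode-by-mode reduction is transparent and the resulting ODE is recognizable. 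Once that normal form is in place, the pole analysis and the additivity of ranks are essentially classical.
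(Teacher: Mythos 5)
Your mode-by-mode reduction is consistent with how the operator actually behaves (the final count $\sum_{\ell=0}^k(2\ell+1)=(k+1)^2$ does arise by summing spherical-harmonic multiplicities), but the heart of your argument, step two, is asserted rather than proved, and the picture behind it is off in a way that matters. The poles of $P_{\sigma}^{-1}$ are defined relative to a Fredholm framework on variable-order Sobolev spaces whose regularity thresholds at the radial sets $S_{\pm}$ depend on $\im\sigma$; they are not simply the values of $\sigma$ at which a classical solution of the reduced ODE matches indicial exponents at the two endpoints. In $3+1$ dimensions the asymptotically hyperbolic operator on $C_{+}$ is the spectral family of the Laplacian on $\mathbb{H}^{3}$, which has \emph{no} resonances, so none of the poles come from solutions living on $C_{+}$ with prescribed endpoint behavior: by the support theorem from \cite{BVW1}, every resonant state is a distribution supported on the corner $S_{+}$, i.e.\ a finite combination $\sum_{k}\alpha_{k}\delta^{(k)}(v)\otimes\phi_{\lambda}$. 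Your ``indicial exponents at the two endpoints'' heuristic would naturally detect the former kind of state and miss the latter; at best the delta-type solutions correspond to the indicial root $-\imath\sigma$ at the degenerate point $v=0$ being a negative integer, which is what forces $\sigma$ into the negative imaginary integers --- but that is a statement about distributional kernel supported at $v=0$, not about a connection problem for hypergeometric functions on an interval.

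Even granting the mode-by-mode framework, the quantitative claims you need --- that at $\sigma=-\imath(k+1)$ each mode $\ell\leq k$ contributes exactly one dimension, that no other $\ell$ contributes, and that the pole is simple --- are precisely what remains to be proved, and your proposal defers them to an unspecified ``closed form'' resolvent. In the paper this content is carried by an explicit finite-dimensional computation: applying $\frac{(1-v)^{2}}{4}P_{-\imath(M+1)}$ to $\sum_{k=0}^{M}\alpha_{k}\delta^{(k)}(v)\otimes\phi_{\lambda}$ reduces the kernel question to a tridiagonal $(M+1)\times(M+1)$ matrix $A_{M}$, and a determinant recursion shows $\det(\lambda I-A_{M})=\prod_{j=0}^{M}(\lambda-j(j+1))$ with simple roots, so the kernel is one-dimensional exactly when $\lambda=\ell(\ell+1)$, $\ell\leq M$. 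As written, your sketch is missing both essential inputs: the support-at-$S_{+}$ argument (no resonances of $\mathbb{H}^{3}$, plus the function-space setup that makes ``pole of $P_{\sigma}^{-1}$'' meaningful) and the explicit eigenvalue computation (or an equivalent) that pins down the locations, multiplicities, and simplicity.
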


The rest of the introduction is devoted to explaining and motivating
Theorem~\ref{thm:main-thm}.  

Suppose that $u$ is the solution of the homogeneous wave equation (or,
equivalently, a forward solution of the inhomogeneous wave equation) on
$3+1$-dimensional Minkowski space:
\begin{equation*}
  \begin{aligned}
    \Box u &= \pd[t]^{2} - \lap u = 0 \quad \text{in }\reals\times
    \reals^{3}\\
    (u, \pd[t]u)|_{t=0}&= (\phi, \psi) \in C^{\infty}_{c}(\reals^{3})
    \times C^{\infty}_{c}(\reals^{3})
  \end{aligned}
\end{equation*}
We now introduce polar coordinates $(r,\omega)$ in the spatial
variables as well as a ``lapse'' parameter $s = t-r$ and define an
auxiliary function
\begin{equation*}
  v(r, s, \omega) = r^{-1}u(s+r, r\omega).
\end{equation*}
Friedlander~\cite{Friedlander} observed that the function $v$ is
smooth in $\rho = r^{-1}$ and so can be extended to $\rho = 0$.  The
\emph{Friedlander radiation field} of $u$ is then given by
\begin{equation*}
  \mathcal{R}_{+}[u](s,\omega) = \lim_{r\to\infty}\pd[s]v(r, s, \omega).
\end{equation*}
In Minkowski spaces (and other static spacetimes), the radiation field
has a number of desirable properties: Not only is it a unitary translation
representation of the wave group, it can also be thought of as
generalizing the Radon transform of the initial data.  (Indeed, in
$\reals\times \reals^{3}$, if the initial data are $(0, \psi)$, then
the radiation field is just the Radon transform of $\psi$.)

In previous work~\cite{BVW1,BVW2}, the author and collaborators showed
that the radiation field admits an asymptotic expansion for suitably
nice data.  Indeed, on a class of asymptotically Minkowski spacetimes,
the radiation field exists and admits an asymptotic expansion in
powers of $s^{-1}$.  The exponents seen arise as the poles of a
meromorphic family of Fredholm operators, denoted $P_{\sigma}^{-1}$ on
the ``boundary at infinity''.

We let $M$ denote the radial compactification of Minkowski space with
a defining function $\rho$ for the boundary.  In other words, we can
consider $\reals\times \reals^{3}$ as the interior of a compact
manifold with boundary via the coordinate change
\begin{equation*}
    t = \frac{1}{\rho}\cos \theta , \quad
    x = \frac{1}{\rho}\omega_{j}\sin\theta,
\end{equation*}
where $\omega_{j} \in \sphere^{2}$ and $\theta \in \sphere^{1}$.  We
denote by $C_{\pm}$ (depending on the sign of $t$) the regions of the
boundary sphere $X\cong \sphere^{3}$ corresponding to where $|t|\gg
|x|$, while we denote by $C_{0}$ the region of $X$ where $|t|\ll
|x|$.  These regions of $X$ naturally inherit conformal families of
metrics; on Minkowski space, $C_{\pm}$ are naturally conformal to
$\mathbb{H}^{3}$ and $C_{0}$ is naturally conformal to
$2+1$-dimensional de Sitter space.

It is the region where $|t|\sim |x|$ that is of the most interest; we
denote these regions by $S_{\pm}$ depending on the sign of $t$.  By
blowing up (in the algebro-geometric sense) the submanifolds $S_{\pm}$
in $M$, we obtain a manifold with corners that has two new boundary
faces, denoted $\scri^{\pm}$ and corresponding to past and future null
infinity.  Figure~\ref{fig:blowup} provides a schematic view of this
blow-up.  The radiation field $\mathcal{R}_{+}[u]$ is then the
rescaled restriction of $u$ to $\scri^{+}$.

\begin{figure}[htp]
  \centering
  \includegraphics{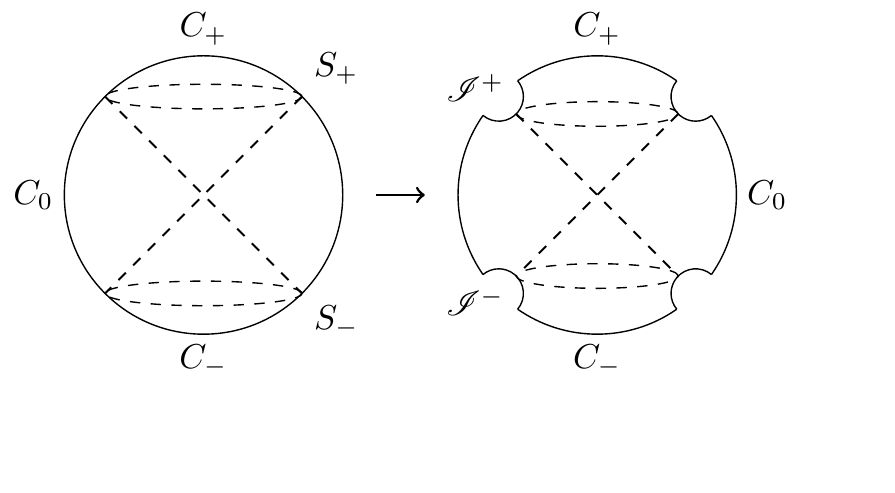}
  \caption{A schematic view of the blow-up.  The lapse function $s$
    increases along $\scri^+$ towards $C_+$.  In the typical Penrose
    diagram of Minkowski space, $C_\pm$ are collapsed to
    $i_\pm$ and $C_0$ is collapsed to $i_0$.}
  \label{fig:blowup}
\end{figure}

Conjugating $\Box$ by $\rho$, multiplying by $\rho^{-2}$, and then
taking the Mellin transform in $\rho$ yields a family of operators
$P_{\sigma}$ on the boundary sphere $X \cong \sphere^{3}$.  This has
the effect of replacing all factors of $\rho\pd[\rho]$ in
$\rho^{-3}\Box \rho$ by $\imath \sigma$.  Although $P_{\sigma}$ is
semiclassically hyperbolic (because $\Box$ is hyperbolic), on
$C_{\pm}$ it is classically elliptic (indeed, it can be conjugated to
the spectral family for the Laplacian on hyperbolic space).
On $C_{0}$, $P_{\sigma}$ is hyperbolic and can be conjugated to a
Klein-Gordon equation on de Sitter space, while at $S_{\pm}$ it is degenerate.

The Hamilton vector field of the symbol of $P_{\sigma}$ is radial at
the conormal bundle of $S_{\pm}$ and so techniques dating back to
Melrose~\cite{Melrose:1994} and refined by
Vasy~\cite{Vasy:kerr-de-sitter} provide a blueprint for establishing
propagation estimates there.

The operator family $P_{\sigma}$ is not Fredholm on standard Sobolev
spaces, but it is when considered on variable-order Sobolev spaces whose
regularity lies below some threshold at $S_{-}$ (depending on the
imaginary part of $\sigma$) and is larger than a similar threshold at
$S_{+}$ (again, depending on the imaginary part of $\sigma$).  As
$P_{\sigma}$ is then invertible on these spaces for very large $\im
\sigma$, we may invert to obtain a meromorphic family of Fredholm
operators $P_{\sigma}^{-1}$.  Because all light rays in Minkowski
space escape to infinity, $P_{\sigma}^{-1}$ has only finitely many
poles in any horizontal strip in $\mathbb{C}$.  The main result of
both previous papers~\cite{BVW1,BVW2} is that the radiation field for
a forward solution has an asymptotic expansion whose exponents are
these poles, which are identified as the resonances of the
asymptotically hyperbolic operator at $C_{+}$.  

For $3+1$-dimensional Minkowski space, however, this asymptotically
hyperbolic operator is the spectral family of the Laplacian on
$\mathbb{H}^{3}$, \emph{which has no resonances}.  In this case, the
resonant states associated to the poles of $P_{\sigma}^{-1}$ must be
supported in $S_{+}$ (rather than $\overline{C_{+}}$).
Theorem~\ref{thm:main-thm} describes the locations of these poles and
the dimension of the corresponding nullspace of $P_{\sigma}$.

The proof of Theorem~\ref{thm:main-thm} proceeds in several steps.  We
first recall from~\cite{BVW1} that any resonant state must be
supported at the intersection of the light cones and the boundary at
infinity because odd-dimensional hyperbolic space has no resonances.
This reduces the problem of finding the poles of $P_{\sigma}^{-1}$
(and the corresponding resonant states) to understanding when
$P_{\sigma}$ has nullspace consisting of a distribution of the form
\begin{equation*}
  \sum_{k=0}^{M}a_{k}\delta^{(k)}(v)\otimes \phi_{\lambda},
\end{equation*}
where $\phi_{\lambda}$ is a spherical harmonic with eigenvalue
$\lambda$.  This immediately implies that the poles of
$P_{\sigma}^{-1}$ are contained in the negative imaginary integers and
reduces the problem of finding the null spaces of an explicit family
of matrices.  For $\sigma = - \imath (M+1)$, $P_{\sigma}$ preserves
the family of such distributions and so the problem of finding the
null space of $P_{\sigma}$ reduces to a linear algebra problem.  We
write down the matrix representing $P_{\sigma}$ and compute its
determinant explicitly.  This shows that the matrix has
one-dimensional null space precisely when $\lambda = k(k+1)$ for
$k = 0, 1, \dots, M$ and hence that $P_{-\imath (M+1)}$ has null space
of dimension
\begin{equation*}
  \sum_{k=0}^{M} \dim (E_{k}),
\end{equation*}
where $E_{k}$ is the space of spherical harmonics with eigenvalue $k(k+1)$.

Unfortunately, finding the elements of the null space explicitly is
sufficiently complicated that we were unable to solve it here by
purely combinatorial means.  It is perhaps surprising how difficult it
is to find an explicit expression for elements of the nullspace of
$P_{-\imath (k+1)}$.  However, given the connection of the radiation
field with the Radon transform, such an expression would provide an
explicit formula for the Radon transform in terms of spherical
harmonics.  Such formulas exist but are similarly complicated (and
have representation-theoretic underpinnings).

In Section~\ref{sec:geometry} we describe some of the geometry of the
radial compactification of Minkowski space, and then in
Section~\ref{sec:operators} we define the operator $P_{\sigma}$ and
recall some of its properties.  We also introduce a convenient
coordinate system that simplifies the linear algebra in the following
section.  Section~\ref{sec:poles-p_sigma-1}
recasts the problem of describing the poles and corresponding resonant
states in terms of linear algebra and finishes the proof of
Theorem~\ref{thm:main-thm}.  Included in
Section~\ref{sec:poles-p_sigma-1} is the exact form of the resonant
states corresponding to the first few poles of $P_{\sigma}^{-1}$.
Finally, in Section~\ref{sec:comb-probl} we conclude by appealing for
a combinatorial expression for the elements of the null space of
$P_{\sigma}$ in this context.

\section{Geometry}
\label{sec:geometry}

The radiation field is the rescaled restriction of a solution $u$ of
the wave equation to null infinity.  In this section we describe a
compactification of Minkowski space on which this statement is a
natural one.  

We begin by introducing coordinates on Minkowski space given by
\begin{align*}
  t &= \frac{1}{\rho} \cos \theta \\
  x &= \frac{1}{\rho}\omega_{j} \sin\theta
\end{align*}
where $\omega_{j} \in \sphere^{2}$.  In terms of these coordinates,
the metric on Minkowski space is given by
\begin{equation*}
  g := -dt^{2} + \sum_{j=1}^{3}dx_{j}^{2} = -\cos 2\theta
  \frac{d\rho^{2}}{\rho^{4}} - 4 \sin\theta\cos\theta
  \frac{d\theta}{\rho}\frac{d\rho}{\rho^{2}} + \cos 2\theta
  \frac{d\theta^{2}}{\rho^{2}} + \sin^{2}\theta \frac{d\omega^{2}}{\rho^{2}}
\end{equation*}
We now replace the coordinate $\theta$ by $v = \cos 2\theta$ to obtain
\begin{equation*}
  g = - v \frac{d\rho^{2}}{\rho^{4}} +
  \frac{dv}{\rho}\frac{d\rho}{\rho^{2}} +
  \frac{v}{4(1-v^{2})}\frac{dv^{2}}{\rho^{2}} + \frac{1-v}{2}\frac{d\omega^{2}}{\rho^{2}}
\end{equation*}
The inverse metric (in coordinates $(\rho, v, \omega)$) is then given
by
\begin{equation*}
  g^{-1} \to
  \begin{pmatrix}
    -v \rho^{4} & 2(1-v^{2})\rho^{3} & 0  \\
    2(1-v^{2})\rho^{3} & 4v(1-v^{2})\rho^{2} & 0 \\
    0 & 0 & \frac{2\rho^{2}}{1-v} h^{-1}
  \end{pmatrix},
\end{equation*}
where $h$ is the standard (round) metric on $\sphere^{2}$.

This radial compactification of Minkowski space has two distinguished
submanifolds where $\rho = 0$ and $v=0$, which we call $S_{\pm}$
($S_{\pm}$ are distinguished by the sign of $t$ -- $S_{+}$ is the set
in the future where $\rho = 0$ and $v=0$, while $S_{-}$ is the
corresponding set in the past.

\section{The operators}
\label{sec:operators}

The central object of study is the operator $L$, given by
\begin{equation*}
  L = \rho^{-3}\Box_{g} \rho.
\end{equation*}
Here the conjugation by $\rho$ should be thought of as accounting for
the standard decay for solutions of the wave equation, while the
prefactor of $\rho^{-2}$ turns a ``scattering operator'' in the sense
of Melrose~\cite{Melrose:1994} into a ``b-operator''~\cite{Melrose:APS}.

We record here the precise form of $L$:
\begin{align*}
  L&= v (\rho\pd[\rho])^{2} + (2 + 4v)\rho\pd[\rho] -
     4(1-v^{2})\rho\pd[\rho]\pd[v] - 4v(1-v^{2})\pd[v]^{2} -
     4(1-v-3v^{2})\pd[v] - \frac{2}{1-v}\lap_{\omega} + (2+3v)
\end{align*}

The operator $P_{\sigma}$ is the reduced normal operator
$\hat{N}(L)(\sigma)$, which effectively replaces $\rho\pd[\rho]$ by
$\imath\sigma$ and is obtained by conjugating $L$ by the Mellin
transform in $\rho$:
\begin{align*}
  P_{\sigma} &= -v\sigma^{2} + (2+4v)\imath\sigma - 4\imath\sigma
               (1-v^{2})\pd[v] - 4v(1-v^{2})\pd[v]^{2} -
               4(1-v-3v^{2})\pd[v] - \frac{2}{1-v}\lap_{\omega} + (2+3v)
\end{align*}

Although the expression above for $P_{\sigma}$ is useful for the
global problem of identifying the Fredholm properties of $P_{\sigma}$,
for our explicit computation it is more convenient to work with a
different coordinate system valid near $S_{+}$.  For the remainder of
this note, we instead take
\begin{equation*}
  \rho = \frac{1}{t + r} , \quad  v= \frac{t-r}{t+r}.
\end{equation*}
In these coordinates, we may write
\begin{equation*}
  \begin{aligned}
    \Box &= \pd[t]^{2} - \pd[r]^{2} - \frac{2}{r}\pd[r] -
    \frac{1}{r^{2}}\lap_{\omega} \\
    &= 4\rho^{2} \left[ -\rho \pd[\rho] \pd[v] - \pd[v] - v\pd[v]^{2}
      + \frac{1}{1-v}(\rho \pd[\rho] + (1 + v) \pd[v]) - \frac{1}{(1-v)^{2}}\lap_{\omega}\right]
  \end{aligned}
\end{equation*}
We then have
\begin{equation*}
  L = \rho^{-3}\Box \rho = 4 \left[ - \rho \pd[\rho] \pd[v] - 2 \pd[v]
    - v \pd[v^{2}] + \frac{1}{1-v} (\rho \pd[\rho] + 1 + (1+v)\pd[v])
    - \frac{1}{(1-v)^{2}}\lap_{\omega}\right]
\end{equation*}
and
\begin{equation*}
  P_{\sigma} = 4 \left[ -(\imath \sigma + 2) \pd[v] - v\pd[v]^{2} +
    \frac{\imath \sigma + 1}{1-v} + \frac{1+v}{1-v}\pd[v] - \frac{1}{(1-v)^{2}}\lap_{\omega}\right].
\end{equation*}
We may then multiply $P_{\sigma}$ by $(1-v)^{2}/4$ and group the terms
with the same degree of homogeneity:
\begin{equation*}
  \begin{aligned}
    \frac{(1-v)^{2}}{4}P_{\sigma} &= \left[ -(\imath \sigma + 1)
      \pd[v] - v\pd[v]^{2} \right] \\
    &\quad + \left[ 2(\imath \sigma + 2)v\pd[v] + 2 v^{2}\pd[v]^{2} +
      (\imath \sigma + 1) - \lap \omega \right] \\
    &\quad + \left[ - (\imath \sigma + 3)v^{2} \pd[v] -
      v^{3}\pd[v]^{2} - (\imath \sigma  + 1)v\right]
  \end{aligned}
\end{equation*}

\section{The poles of $P_\sigma^{-1}$}
\label{sec:poles-p_sigma-1}

At each pole of $P_{\sigma}^{-1}$, the residue can be identified with
an operator whose image is a ``resonant state''.  As $P_{\sigma}$ is
not self-adjoint, the residue operators do not project onto these
states, but we abuse terminology by calling them resonant states
anyway.  We know from the results of~\cite{BVW1} that if $f$ is
supported away from $\overline{C_{-}}$, then $P_{\sigma}^{-1}f$ is
supported in $\overline{C_{+}}$.  Moreover, if $P_{\sigma}^{-1}f$ is
not supported in $S_{+}$, then the pole (and corresponding state) can
be identified with a resonance of a Laplace-like operator on $C_{+}$.
In $n+1$-dimensional Minkowski space, this operator is the Laplacian
on $\mathbb{H}^{n}$.  

Because $\mathbb{H}^{3}$ has no resonances, we can conclude that all
resonant states of $P_{\sigma}^{-1}$ on $3+1$-dimensional Minkowski
space must be supported in $S_{+}$.  The resonant states must
therefore be sums of the following form:
\begin{equation}
  \label{eq:test-distributions}
  \sum_{k=0}^{M}\alpha_{k}\delta^{(k)}(v) \otimes \phi_{\lambda}(\omega),
\end{equation}
where $\phi_{\lambda}$ is a spherical harmonic with eigenvalue
$\lambda$.  

In the rest of this section, we prove Theorem~\ref{thm:main-thm} in
several steps.  We first compute the action of $P_{\sigma}$ on such a
sum, which shows that $P_{\sigma}$ has no null space unless
$\sigma = -\imath (M+1)$ for $M = 0 , 1, \dots$.  For such a $\sigma$,
we then interpret $\frac{(1-v)^{2}}{4}P_{\sigma}$ (which has the same
null space as $P_{\sigma}$ when acting on such distributions) as a
family of matrices depending on $\lambda$.  We compute this
determinant and show that the matrix has a $1$-dimensional null space
exactly when $\lambda = k(k+1)$ for $k = 0 ,1, \dots, M$.  Appealing
to the well-known dimension of the space of spherical harmonics with
eigenvalue $k(k+1)$ then completes the proof of
Theorem~\ref{thm:main-thm}.  

We now record the action of $\frac{(1-v)^{2}}{4}P_{\sigma}$ on
distributions of the form above~\eqref{eq:test-distributions}.  We
rely on the following well-known fact:\footnote{This formula is \emph{a
  priori} valid only for $r \leq k$, but if we interpret
  $(k-r)! = \Gamma ( k -r + 1)$, then the denominator is infinite for
  $r > k$ and so the right-hand side is zero there.}
\begin{equation*}
  v^{r}\delta^{k}(v) = \frac{k!}{(k-r)!}\delta^{(k-r)}(v)
\end{equation*}
We then have the following:
\begin{equation*}
  \begin{aligned}
    \frac{(1-v)^{2}}{4}P_{\sigma}( \delta^{(k)}(v) \otimes
    \phi_{\lambda}(\omega)) &= \left[ (-\imath \sigma + 1) +
      (k+2)\right] \delta ^{(k+1)}\otimes \phi_{\lambda} \\
    &\quad + \left[ -2(\imath \sigma + 2)(k+1) + 2 (k+1)(k+2) +
      (\imath \sigma + 1) + \lambda\right]\delta ^{(k)}\otimes
    \phi_{\lambda} \\
    &\quad + \left[ -(\imath\sigma + 3)k(k+1) + k(k+1)(k+2) + (\imath
      \sigma + 1)k\right] \delta^{(k-1)}\otimes \phi_{\lambda}
  \end{aligned}
\end{equation*}
In particular, for a sum of the form~\eqref{eq:test-distributions} to
lie in the null space of $P_{\sigma}$, the leading term must vanish
and so $M+1 - \imath \sigma = 0$, i.e., $\imath \sigma = M+1$.  We may
then take $\sigma = -\imath (M+1)$, apply $P_{\sigma}$ to such a
sum, and rearrange the terms to find the following:
\begin{equation}
  \label{eq:P-sigma-on-sum}
  \begin{aligned}
    \frac{(1-v)^{2}}{4}P_{-\imath (M+1)}\left(
      \sum_{k=0}^{M}\alpha_{k}\delta^{(k)}(v) \otimes
      \phi_{\lambda}(\omega\right) &= \sum _{k=0}^{M} \left[
      (k-1-M)\alpha_{k-1} \right.\\
      &\quad + (\lambda + M + 2 + 2(k+1)(k-M-1))\alpha_{k} \\
      &\quad \left. + (k+1)(M+2
      + (k+2)(k-M-1))\alpha_{k+1} \right]\delta^{(k)}(v)\otimes
    \phi_{\lambda}(\omega)
  \end{aligned}
\end{equation}

We have now reduced the problem to finding a vector of coefficients
$\alpha_{k}$ so that the sum~\eqref{eq:P-sigma-on-sum} vanishes.  This
is equivalent to finding the null space of a matrix $\lambda I -
A_{M}$, where $A_{M}$ is the tridiagonal $(M+1)\times (M+1)$-matrix
with the following entries:
\begin{equation*}
  \begin{aligned}
    a_{k,k} &= 2(k+1)(M+1-k) - M - 2\\
    a_{k-1,k} &= k( (k+1)(M+2 - k) - M - 2) \\
    a_{k,k-1} &= M + 1 - k
  \end{aligned}
\end{equation*}
Here all indices ($k$ and $k-1$) should be interpreted as taking the values
$0, 1 , \dots, M$.

The rest of the proof of Theorem~\ref{thm:main-thm} then follows from
the following proposition:
\begin{prop}
  \label{prop:det-calc}
  The matrix $A_{M}$ has simple eigenvalues $k(k+1)$ for $k = 0, 1,
  \dots, M$.  In particular, we have that
  \begin{equation*}
    \det (\lambda I - A_{M} ) = p_{M}(\lambda), 
  \end{equation*}
  where
  \begin{equation*}
    p_{k}(\lambda) = \prod_{j=0}^{k}(\lambda - j(j+1)).
  \end{equation*}
\end{prop}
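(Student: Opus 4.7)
The plan is to exploit the tridiagonal structure of $A_M$. The subdiagonal entries $a_{k,k-1} = M+1-k$ are strictly positive for $k = 1, \ldots, M$, so $A_M$ is an unreduced tridiagonal matrix. Standard theory (via the three-term recurrence satisfied by the characteristic polynomials of the leading principal minors) then implies that every eigenvalue of $A_M$ has a one-dimensional eigenspace, and in particular that all eigenvalues are simple. Since $\det(\lambda I - A_M)$ and $p_M(\lambda)$ are monic of the same degree $M+1$, it suffices to verify that $\det(j(j+1)\,I - A_M) = 0$ for each $j = 0, 1, \ldots, M$.

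Before attacking the eigenvalue equation, I would simplify the off-diagonal entries. The identity $(k+2)(M+1-k) - (M+2) = (k+1)(M-k)$ (immediate by expansion) yields, after the shift $k \mapsto k-1$, the factorization $a_{k-1,k} = k^{2}(M+1-k)$. Since $a_{k,k-1} = M+1-k$, the ratio of matching off-diagonal entries is $k^{2}$, and conjugation by the diagonal matrix $D = \mathrm{diag}(1, 1/1!, 1/2!, \ldots, 1/M!)$ produces a symmetric tridiagonal matrix $\tilde A_M$, having the same spectrum as $A_M$, with diagonal entries $2(k+1)(M+1-k) - M - 2$ and off-diagonal entries $k(M+1-k)$.

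With this symmetric form available, the cleanest route is to recognize $\tilde A_M$ as (up to standard normalization) the Jacobi matrix of the dual Hahn orthogonal polynomials with parameters $\gamma = \delta = 0$ and $N = M$, whose spectrum is classically known to equal $\{x(x+1) : x = 0, 1, \ldots, M\}$. A self-contained alternative is to prove the factorization directly by induction on $n$, using the three-term recurrence $D_n(\lambda) = (\lambda - \tilde a_{n-1,n-1})\,D_{n-1}(\lambda) - (n-1)^{2}(M+2-n)^{2}\,D_{n-2}(\lambda)$ for the leading principal minors $D_n$ of $\lambda I - \tilde A_M$, and showing that $D_{M+1}(\lambda)$ telescopes into $\prod_{j=0}^{M}(\lambda - j(j+1))$.

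The main obstacle is that the entries of $A_M$ depend nontrivially on $M$, so a direct inductive comparison between $A_M$ and $A_{M-1}$ is not available; this is precisely what makes the orthogonal-polynomial identification attractive. A further benefit of the dual Hahn viewpoint is that the associated orthonormal basis would furnish a representation-theoretic description of the eigenvectors, precisely the sort of explicit formula the author flags as the unresolved combinatorial problem of Section~\ref{sec:comb-probl}.
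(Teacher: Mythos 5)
Your main route is correct and genuinely different from the paper's. The paper proves the proposition by brute force: it sets up the three-term recurrence for the leading principal minors $d_k$ of $\lambda I - A_M$ and proves, by induction, an explicit closed formula for \emph{every} $d_k$ (the sum with coefficients $c_{k,\ell}$ in Lemma~\ref{lemma:full-det-calc}), which collapses to $p_M(\lambda)$ only at $k=M$ because of the factors $M-k$. You instead observe the factorization $a_{k-1,k}=k^2(M+1-k)$, symmetrize by a diagonal conjugation, and identify the result with the Jacobi matrix of the dual Hahn polynomials with $\gamma=\delta=0$, $N=M$; the identification is genuine (the diagonal entries satisfy $2(k+1)(M+1-k)-M-2=2k(M-k)+M$, matching $A_k+C_k$ with $A_k=(k+1)(k-M)$, $C_k=k(k-M-1)$, and the off-diagonal products $k^2(M+1-k)^2$ match $A_{k-1}C_k$), and since $C_0=A_M=0$, the vectors $\bigl(R_n(x(x+1))\bigr)_{n=0}^M$ are eigenvectors with the $M+1$ distinct eigenvalues $x(x+1)$, $x=0,\dots,M$, which forces $\det(\lambda I-A_M)=p_M(\lambda)$ and simplicity. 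This buys more than the paper's computation: the eigenvectors of $A_M$ are (up to the diagonal conjugation) values of dual Hahn polynomials, i.e.\ terminating ${}_3F_2$ sums, which essentially resolves Problem~\ref{problem} and fits the representation-theoretic expectation voiced in Section~\ref{sec:comb-probl}.

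Two caveats. First, the conjugating matrix should be $\mathrm{diag}(0!,1!,\dots,M!)$ (or its inverse, depending on which side you conjugate); with your $\mathrm{diag}(1,1/1!,\dots,1/M!)$ used as $DAD^{-1}$ the off-diagonal entries come out as $k^3(M+1-k)$ and $(M+1-k)/k$ rather than the symmetric $k(M+1-k)$ --- harmless, but state the direction. Also, geometric simplicity from ``unreduced tridiagonal'' alone does not give algebraic simplicity; you get that either from the symmetrization (diagonalizability) or simply from the distinctness of the roots of $p_M$ once the characteristic polynomial is identified. Second, your ``self-contained alternative'' is not as innocuous as the word ``telescopes'' suggests: the leading principal minors $D_n(\lambda)$ do \emph{not} factor into products of $(\lambda-j(j+1))$ for $n\le M$ (their dependence on $M$ is exactly what makes the paper's Lemma~\ref{lemma:full-det-calc} so involved), so an induction on the minor recurrence requires guessing and propagating a closed formula of the paper's type; if you want a self-contained proof, the dual Hahn eigenvector verification (a one-line check of the three-term recurrence at $\lambda=x(x+1)$) is the honest elementary substitute.
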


Proposition~\ref{prop:det-calc} follows immediately by taking $k = M$
in the following lemma:
\begin{lemma}
  \label{lemma:full-det-calc}
  Let $d_{k}$ be the determinant of the $(k+1)\times (k+1)$-minor of
  $\lambda I  - A_{M}$ consisting of the first $k+1$ columns and rows
  of the matrix (i.e., the columns and rows labeled $0, 1, \dots,
  k$).  If $p_{k}(\lambda)$ is as in Proposition~\ref{prop:det-calc}, then
  \begin{equation*}
    d_{k} = \sum_{\ell = 0}^{k+1} c_{k.\ell} \left( \prod
      _{j=1}^{\ell} (M - k + \ell - j)\right) p_{k-\ell}(\lambda),
  \end{equation*}
  where
  \begin{equation*}
    c_{k,\ell}= \frac{(-1)^{\ell}}{\ell !} \left( \frac{(k+1)!}{(k+1-\ell)!}\right)^{2},
  \end{equation*}
  and we interpret $p_{-1}(\lambda) = 1$.
\end{lemma}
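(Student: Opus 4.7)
The approach is induction on $k$ via a three-term recurrence coming from the tridiagonal structure of $\lambda I - A_{M}$. Expanding the leading $(k+1)\times(k+1)$ minor along its last row gives
\begin{equation*}
  d_{k} = (\lambda - a_{k,k})\, d_{k-1} - a_{k,k-1}\, a_{k-1,k}\, d_{k-2},
\end{equation*}
with the conventions $d_{-1} = 1$ and $d_{-2} = 0$. A brief simplification shows $(k+1)(M+2-k) - M - 2 = k(M+1-k)$, so that $a_{k-1,k} = k^{2}(M+1-k)$ and $a_{k,k-1}\,a_{k-1,k} = k^{2}(M+1-k)^{2}$. The recurrence thus takes the clean form
\begin{equation*}
  d_{k} = \bigl(\lambda - 2(k+1)(M+1-k) + M + 2\bigr)\, d_{k-1} - k^{2}(M+1-k)^{2}\, d_{k-2}.
\end{equation*}

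The base cases $k = -1$ and $k = 0$ are immediate from the claimed formula (using $p_{-1}(\lambda) = 1$, $c_{0,0} = 1$, $c_{0,1} = -1$). For the inductive step, I substitute the inductive hypothesis for $d_{k-1}$ and $d_{k-2}$ into the recurrence. The one nontrivial manipulation is handling $\lambda\, p_{m}(\lambda)$ inside $\lambda\, d_{k-1}$; this is done via the identity
\begin{equation*}
  \lambda\, p_{m}(\lambda) = p_{m+1}(\lambda) + (m+1)(m+2)\, p_{m}(\lambda),
\end{equation*}
which follows directly from $p_{m+1}(\lambda) = (\lambda - (m+1)(m+2))\, p_{m}(\lambda)$. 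After applying this identity to each term, the right-hand side of the recurrence becomes a linear combination of the $p_{k-\ell}(\lambda)$ for $\ell = 0, 1, \ldots, k+1$, and the inductive step reduces to matching the coefficient of each $p_{k-\ell}(\lambda)$ against $c_{k,\ell}\, Q_{k,\ell}$, where I write $Q_{k,\ell} = \prod_{j=1}^{\ell}(M-k+\ell-j)$ for the product appearing in the statement of the lemma.

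The main obstacle is this coefficient-matching step. For each fixed $\ell$, the coefficient of $p_{k-\ell}(\lambda)$ on the right-hand side receives three contributions: a term $c_{k-1,\ell}\, Q_{k-1,\ell}$ from the shifted piece of $\lambda\, d_{k-1}$; a term $c_{k-1,\ell-1}\, Q_{k-1,\ell-1}\,\bigl[(k-\ell+1)(k-\ell+2) + M + 2 - 2(k+1)(M+1-k)\bigr]$ assembled from the remaining pieces of $(\lambda - a_{k,k})\, d_{k-1}$; and a term $-k^{2}(M+1-k)^{2}\, c_{k-2,\ell-2}\, Q_{k-2,\ell-2}$ from the $d_{k-2}$ piece. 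Writing $c_{k,\ell} = (-1)^{\ell}\binom{k+1}{\ell}^{2}\ell!$ exposes a squared-binomial structure that appears to rule out a direct appeal to Chu--Vandermonde or other standard summation identities, so my plan is to verify the required equality by clearing factorial denominators and matching both sides coefficient-by-coefficient as polynomials in $M$ of bounded degree. The boundary cases $\ell \in \{0, 1, k+1\}$ require separate but straightforward verification, since some of the three contributions vanish under the conventions $c_{k,\ell} = 0$ for $\ell > k+1$ and $p_{-1}(\lambda) = 1$.
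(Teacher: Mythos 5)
Your strategy coincides with the paper's proof: the same tridiagonal three-term recursion (your explicit minus sign and the simplification $a_{k-1,k}=k^{2}(M+1-k)$ are correct; the paper writes the recursion with a $+$ and absorbs the sign into the factor $k(M+2+(k+1)(k-M-2))=-k^{2}(M+1-k)$), the same identity $\lambda p_{m}=p_{m+1}+(m+1)(m+2)p_{m}$, correct base cases, and the same reduction to matching the coefficient of each $p_{k-\ell}$, with the three contributions identified correctly. The genuine gap is that this matching --- the only nontrivial content of the lemma --- is never carried out: you call it the ``main obstacle'' and offer only a plan (``clearing factorial denominators and matching \ldots as polynomials in $M$''), so the inductive step is not established as written.

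The missing verification is easier than you anticipate, and no Chu--Vandermonde-type summation enters: for fixed $\ell$ there is nothing to sum, only a three-term relation among consecutive $c$'s. Writing $\Pi=\prod_{j=1}^{\ell-1}(M-k+\ell-j)$, one has $Q_{k,\ell}=(M-k)\Pi$, $Q_{k-1,\ell}=(M-k+\ell)\Pi$, $Q_{k-1,\ell-1}=\Pi$, and $(M+1-k)Q_{k-2,\ell-2}=\Pi$, so after dividing by the common factor $\Pi$ the required equality is linear in $M$:
\begin{equation*}
  \begin{aligned}
    (M-k)c_{k,\ell} &= (M-k+\ell)c_{k-1,\ell} + \bigl[(k-\ell+1)(k-\ell+2)+M+2-2(k+1)(M+1-k)\bigr]c_{k-1,\ell-1} \\
    &\quad - k^{2}(M+1-k)\,c_{k-2,\ell-2}.
  \end{aligned}
\end{equation*}
Splitting the right-hand side into the part proportional to $(M-k)$ and the $M$-independent remainder reduces this to two elementary checks on the closed form,
\begin{gather*}
  \ell c_{k-1,\ell}+\bigl((k-\ell+1)(k-\ell+2)-k\bigr)c_{k-1,\ell-1}-k^{2}c_{k-2,\ell-2}=0,\\
  c_{k,\ell}=c_{k-1,\ell}-(2k+1)c_{k-1,\ell-1}-k^{2}c_{k-2,\ell-2},
\end{gather*}
both immediate from $c_{k,\ell}=\frac{(-1)^{\ell}}{\ell!}\bigl((k+1)!/(k+1-\ell)!\bigr)^{2}$. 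This is exactly how the paper closes the induction; until this computation (or your polynomial-in-$M$ matching) is actually performed, the proof is incomplete at its decisive step.
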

Observe that all terms containing $p_{k-\ell}(\lambda)$ with
$\ell \geq 1$ in the expression for $d_{k}$ in
Lemma~\ref{lemma:full-det-calc} are multiplied by a factor $(M-k)$ and
hence vanish when $k=M$, leaving only the term $c_{M, 0}
p_{M}(\lambda) = p_{M}(\lambda)$.

\begin{proof}[Proof of Lemma~\ref{lemma:full-det-calc}]
  The matrix $A_{M}$ is tridiagonal, so $d_{k}$ can be computed recursively:
  \begin{equation*}
    d_{k} = (\lambda - a_{k,k})d_{k-1} + a_{k,k-1}a_{k-1,k}d_{k-2}.
  \end{equation*}
  We therefore proceed by induction, interpreting $d_{-1} = 1$ so that the
  lemma holds for $k = -1$.  In particular, we have $c_{-1, 0}= 1$ and
  $c_{-1,\ell}=0$ for $\ell \geq 1$.

  In computing below, we use the following relationship between the $p_{k}$:
  \begin{equation*}
    \lambda p_{k-1-\ell}(\lambda) = p_{k-\ell}(\lambda) +
    (k-\ell)(k-\ell + 1)p_{k-1-\ell}(\lambda)
  \end{equation*}

  We now compute the two terms in the recursive expression for
  $d_{k}$.  We first have the following:
  \begin{equation*}
    \begin{aligned}
      (\lambda - a_{k,k})d_{k-1} &= \left( \lambda + M + 2 -
        2(k+1)(M+1-k)\right) \sum_{\ell = 0}^{k}c_{k-1,\ell} \left(
        \prod _{j=1}^{\ell} (M- k + 1 + \ell - j)\right)
      p_{k-1-\ell}(\lambda) \\
      &= \sum_{\ell = 0}^{k} c_{k-1,\ell} \left(
        \prod_{j=0}^{\ell-1}(M-k+\ell - j)\right) p_{k-\ell}(\lambda)
      \\
      &\quad + \sum_{\ell = 1}^{k+1} c_{k-1,\ell-1}\left(
        \prod_{j=1}^{\ell-1}(M-k+\ell-j)\right) \left[ (k-\ell + 1)
        (k-\ell + 2) + M \right.\\
        &\quad\quad\quad\left.+ 2 - 2(k+1)(M+1-k)\right]p_{k-\ell}(\lambda)
    \end{aligned}
  \end{equation*}
  The second term is given by
  \begin{equation*}
    \begin{aligned}
      a_{k,k-1}a_{k-1,k}d_{k-2} = \sum_{\ell =
        2}^{k+1}c_{k-2,\ell-2}\left(
        \prod_{j=1}^{\ell-1}(M-k+\ell-j)\right)k(M+2+(k+1)(k-M-2))p_{k-\ell}(\lambda)
    \end{aligned}
  \end{equation*}
  Adding the two terms and equating coefficients with the desired
  expression for $d_{k}$, we find that
  \begin{equation*}
    \begin{aligned}
      (M-k)c_{k,\ell} &= (M-k+\ell)c_{k-1,\ell} \\
      &\quad + \left[ (k-\ell+1)(k-\ell +2) + 2(k+1)(k-M-1) + M+2
      \right]c_{k-1,\ell-1} \\ 
      &\quad + k \left[ M + 2 + (k+1)(k-M-2)\right]c_{k-2,\ell-2}.
    \end{aligned}
  \end{equation*}
  We rewrite this equation suggestively:
  \begin{equation}
    \label{eq:recursive-ck}
    \begin{aligned}
      (M-k)c_{k,\ell} &= (M-k)\left( c_{k-1,\ell} -
        (2k+1)c_{k-1,\ell-1} - k^{2}c_{k-2,\ell-2}\right) \\
      &\quad + \ell c_{k-1,\ell} + \left( (k-\ell+1)(k-\ell+2) - k
      \right)c_{k-1,\ell-1} - k^{2}c_{k-2,\ell-2}
    \end{aligned}
  \end{equation}
  
  To prove the lemma, it therefore suffices to show that $c_{k,\ell}$
  is an integer.  We prove this fact by induction.  We have already
  seen that $c_{-1, 0}=1$ and $c_{-1,\ell}=0$ for $\ell \geq 1$.  By
  the induction hypothesis, we assume that
  \begin{equation*}
    c_{k',\ell'} = \frac{(-1)^{\ell'}}{(\ell')!}\left( \frac{(k'+1)!}{(k'+1-\ell')!}\right)^{2}
  \end{equation*}
  for all $(k',\ell') < (k,\ell)$, where we define $(a',b') < (a,b)$ if either
  \begin{itemize}
  \item $b' < b$, or 
  \item $b'=b$ and $a' < a$.
  \end{itemize}
  
  We turn first to the second line of
  equation~\eqref{eq:recursive-ck}.  By the induction hypothesis,
  \begin{equation*}
    \begin{aligned}
      &\ell c_{k-1,\ell} + \left( (k-\ell+1)(k-\ell+2) - k
      \right)c_{k-1,\ell-1} - k^{2}c_{k-2,\ell-2} \\
      &\quad = \frac{(-1)^{\ell}}{(\ell-1)!}\left(
        \frac{k!}{(k-\ell+1)!}\right)^{2}\left( (k-\ell+1)^{2} -
        (k-\ell+1)^{2} - (k-\ell+1) + k - (\ell - 1)\right) = 0
    \end{aligned}
  \end{equation*}
  Equation~\eqref{eq:recursive-ck} and the induction hypothesis then imply that 
  \begin{equation*}
    \begin{aligned}
      c_{k,\ell} &= c_{k-1,\ell} - (2k+1)c_{k-1,\ell-1} -
      k^{2}c_{k-2,\ell-2} \\
      &= \frac{(-1)^{\ell}}{\ell !}\left(
        \frac{k!}{(k-\ell+1)!}\right)^{2} \left( (k-\ell+1)^{2} +
        (2k+1)\ell - \ell (\ell -1)\right) \\
      &= \frac{(-1)^{\ell}}{\ell !} \left( \frac{(k+1)!}{(k-\ell+1)!}\right)^{2},
    \end{aligned}
  \end{equation*}
  finishing the proof of the lemma.
\end{proof}

\subsection{The first few resonant states}
\label{sec:first-few-poles}

In this section we record the first five sets of eigenvectors of the
matrix $A_{M}$.

For $M= 0$, we have that $A_{0} = (0)$, so its only eigenvalue is $0$
with eigenvector $(1)$.

For $M=1$, we have that
\begin{equation*}
  A_{1} =
  \begin{pmatrix}
    1 & 1 \\ 1 & 1
  \end{pmatrix}
,
\end{equation*}
so that the eigenvectors are
\begin{equation*}
  \vect{v}_{0} =
  \begin{pmatrix}
    -1 \\ 1
  \end{pmatrix}
, \quad \vect{v}_{2} =
\begin{pmatrix}
  1 \\ 1
\end{pmatrix}
.
\end{equation*}

For $M=2$, we have
\begin{equation*}
  A_{2} =
  \begin{pmatrix}
    2 & 2 & 0 \\ 2 & 4 & 4 \\ 0 & 1 & 2
  \end{pmatrix}
,
\end{equation*}
with eigenvectors
\begin{equation*}
  \vect{v}_{0} =
  \begin{pmatrix}
    2 \\ -2 \\ 1
  \end{pmatrix}
, \quad \vect{v}_{2} =
\begin{pmatrix}
  -2 \\ 0 \\ 1
\end{pmatrix}
, \quad \vect{v}_{6} =
\begin{pmatrix}
  2 \\ 4 \\ 1
\end{pmatrix}
.
\end{equation*}

For $M=3$, we record the matrix
\begin{equation*}
  A_{3} =
  \begin{pmatrix}
    3  & 3 & 0 & 0 \\ 3 & 7 & 8 & 0 \\ 0 & 2 & 7 & 9 \\ 0 & 0 & 1 & 3
  \end{pmatrix}
,
\end{equation*}
so that the eigenvectors are
\begin{equation*}
  \vect{v}_{0} =
  \begin{pmatrix}
    -6 \\ 6 \\ -3 \\ 1 
  \end{pmatrix}
  , \quad \vect{v}_{2} =
  \begin{pmatrix}
    6 \\ -2 \\ -1 \\ 1
  \end{pmatrix}
  , \quad \vect{v}_{6} =
  \begin{pmatrix}
    -6 \\ -6 \\ 3 \\ 1
  \end{pmatrix}
  , \quad \vect{v}_{12} =
  \begin{pmatrix}
    6 \\ 18 \\ 9 \\ 1
  \end{pmatrix}
.
\end{equation*}

Finally, for $M=4$, the matrix is
\begin{equation*}
  A_{4}=
  \begin{pmatrix}
    4 & 4 & 0 & 0 & 0 \\
    4 & 10 & 12 & 0 & 0 \\
    0 & 3 & 12 & 18 & 0 \\
    0  & 0 & 2 & 10 & 16 \\
    0 & 0 & 0 & 1 & 4
  \end{pmatrix}
,
\end{equation*}
so that the eigenvectors are
\begin{equation*}
  \vect{v}_{0} =
  \begin{pmatrix}
    24 \\ -24 \\ 12 \\ -4 \\ 1
  \end{pmatrix}
  , \quad \vect{v}_{2} =
  \begin{pmatrix}
    -24 \\ 12 \\ 0 \\ -2 \\ 1
  \end{pmatrix}
  , \quad \vect{v}_{6} =
  \begin{pmatrix}
    24 \\ 12 \\ -12 \\ 2 \\ 1
  \end{pmatrix}
  , \quad \vect{v}_{12} =
  \begin{pmatrix}
    -24 \\ -48 \\ 0 \\ 8 \\ 1
  \end{pmatrix}
  , \quad \vect{v}_{20} =
  \begin{pmatrix}
    24 \\ 96 \\ 72 \\ 16 \\ 1
  \end{pmatrix}
.
\end{equation*}

\section{A combinatorial problem}
\label{sec:comb-probl}

We conclude this note by posing a combinatorial problem.  In principle
it is possible to determine the resonant states of $P_{\sigma}^{-1}$
by purely combinatorial means.  Specifically, this would be achieved by explicitly
finding the eigenvectors of the matrix $A_{M}$ above.  The computation
above shows that the eigenvalues are $k(k+1)$ for $k = 0, \dots , M$.

\begin{question}
  \label{problem}
  Find a general expression for the eigenvectors of the matrix
  $A_{M}$.
\end{question}

The resolution of this problem would provide an explicit formula for
the resonant states of $P_{\sigma}$ on Minkowski space and thus give an
explicit expression for the radiation field in terms of spherical
harmonics.  Such a formula would then make it feasible to compute the
radiation field explicitly for non-trivial examples.

Moreover, given the connection between the radiation field and the
Radon transform, such a formula should also recover a formula for the
Radon transform in terms of a spherical harmonic decomposition.
Existing formulas typically rely on the Funk--Hecke formula and thus
involve the Legendre polynomials.  We therefore expect that the
general expression for the eigenvectors of $A_{M}$ ought to be
expressible in terms of coefficients of Legendre polynomials.


\end{document}